\documentclass[11pt]{article}
\usepackage{latexsym,amssymb,amsmath,amsthm,enumerate,geometry,float,cite}
\geometry{a4paper,left=2cm,right=2cm, top=2cm, bottom=2cm}
\newtheorem{theorem}{Theorem}

\newtheorem{claim}{Claim}
\usepackage{lineno}
\usepackage{setspace}

\begin{document}
\onehalfspace

\title{Maximizing the Mostar index for bipartite graphs and split graphs}
\author{
\v{S}tefko Miklavi\v{c}$^1$\and 
Johannes Pardey$^2$\and 
Dieter Rautenbach$^2$\and 
Florian Werner$^2$}
\date{}

\maketitle
\vspace{-10mm}
\begin{center}
{\small 
$^1$ University of Primorska, Institute Andrej Maru\v{s}i\v{c}, Koper, Slovenia\\
\texttt{stefko.miklavic@upr.si}\\[3mm]
$^2$ Institute of Optimization and Operations Research, Ulm University, Ulm, Germany\\
\texttt{$\{$johannes.pardey,dieter.rautenbach,florian.werner$\}$@uni-ulm.de}
}
\end{center}

\begin{abstract}
Do\v{s}li\'{c} et al.~defined the Mostar index of a graph $G$ 
as $\sum\limits_{uv\in E(G)}|n_G(u,v)-n_G(v,u)|$,
where, for an edge $uv$ of $G$,
the term $n_G(u,v)$ denotes the number of vertices of $G$
that have a smaller distance in $G$ to $u$ than to $v$.
Contributing to conjectures posed by Do\v{s}li\'{c} et al.,
we show that 
the Mostar index of bipartite graphs of order $n$ is at most $\frac{\sqrt{3}}{18}n^3$, 
and that
the Mostar index of split graphs of order $n$ is at most $\frac{4}{27}n^3$.\\[3mm]
{\bf Keywords:} Mostar index; distance unbalance
\end{abstract}

\section{Introduction}

Do\v{s}li\'{c} et al.~\cite{domasktizu} defined the {\it Mostar index} $Mo(G)$ of a graph $G$ as
$$Mo(G)=\sum\limits_{uv\in E(G)}|n_G(u,v)-n_G(v,u)|,$$
where, for an edge $uv$ of $G$,
the term $n_G(u,v)$ denotes the number of vertices of $G$
that have a smaller distance in $G$ to $u$ than to $v$.
They conjectured that the complete bipartite graph
$K_{n/3,2n/3}$ has maximum Mostar index among all bipartite graphs of order $n$
(cf. \cite[Conjecture 19]{domasktizu}),
and that
$S_{n/3,2n/3}$ has maximum Mostar index among all graphs of order $n$
(cf. \cite[Conjecture 20]{domasktizu}),
where $S_{k,n-k}$ denotes the split graph that arises from the disjoint
union of a clique $C$ of order $k$ and an independent set $I$ of order $n-k$
by adding all possible edges between $C$ and $I$.
The Mostar index of a complete bipartite graph $K_{\alpha n,(1-\alpha)n}$ 
with $\alpha \leq \frac{1}{2}$ equals $\alpha(1-\alpha)(1-2\alpha)n^3$, 
which, considered as a function of $\alpha$, 
is maximized for $\alpha_1=\frac{1}{2}\left(1-\frac{1}{\sqrt{3}}\right)\approx 0.21132$.
As observed by Geneson and Tsai \cite{gets},
this means that, for sufficiently large $n$, 
Conjecture 19 from \cite{domasktizu} does not hold,
because the Mostar index of $K_{\lfloor \alpha_1 n\rfloor,n-\lfloor \alpha_1 n\rfloor}$ 
is larger than that of $K_{\lfloor n/3\rfloor,n-\lfloor n/3\rfloor}$.

Since its introduction in 2018 
the Mostar index has already incited a lot of research,
mostly concerning sparse graphs and trees  {\cite{AXK, DL2, DL3, HZ1, HZ2, Tepeh}, chemical graphs \cite{CLXZ, DL1, GA, GR, XZTHD}, and hypercube-related graphs \cite{Mollard, OSS}, see also the recent survey \cite{aldo}.
However, the two mentioned conjectures from \cite{domasktizu} remained open,
which suggests that they are difficult.
In fact, they do not seem amenable to the usual transformation arguments 
that modify a given graph iteratively.
An interesting feature of the Mostar index in dense graphs is that,
in order to maximize it,
it seems plausible to have many edges.
If the graphs get too dense though, 
then the contribution of each individual edge to the Mostar index tends to decrease.
Therefore, maximizing the Mostar index over a class of possibly dense graphs 
requires a delicate balance between the total number of edges and 
the average contribution per edge.

In this paper we contribute to the above mentioned conjectures by proving the following two theorems.

\begin{theorem}\label{theorem1}
If $G$ is a bipartite graph of order $n$, then 
$$Mo(G) \leq \alpha_1(1-\alpha_1)(1-2\alpha_1)n^3
=\frac{\sqrt{3}}{18}n^3
\approx 0.096225n^3.$$
\end{theorem}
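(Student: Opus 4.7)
The plan is to prove the bound via a per-edge inequality followed by a global optimization, with the extremal case being (essentially) the complete bipartite graph $K_{\lfloor \alpha_1 n\rfloor,\lceil (1-\alpha_1)n\rceil}$.

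For every edge $uv$ of a bipartite graph $G$ of order $n$, I would first establish the per-edge bound
$$|n_G(u,v) - n_G(v,u)| \le n - 2\min\{d_G(u),d_G(v)\}.$$
Two facts specific to bipartite graphs yield this. First, for any vertex $w$, the distances $d_G(w,u)$ and $d_G(w,v)$ have opposite parities (since $u$ and $v$ lie in different parts of the bipartition) and differ by at most $1$ (since $uv \in E(G)$); hence they differ by exactly $1$, no vertex is equidistant, and $n_G(u,v)+n_G(v,u)=n$. Second, each of $u$ and the $d_G(u)-1$ vertices in $N_G(u)\setminus\{v\}$ is strictly closer to $u$ than to $v$: the neighbors of $u$ other than $v$ lie in the same part as $v$, hence are at distance $1$ from $u$ and distance $2$ from $v$. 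This gives $n_G(u,v)\ge d_G(u)$ and, symmetrically, $n_G(v,u) \ge d_G(v)$, from which the displayed inequality follows.

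Summing over all edges yields
$$Mo(G) \ \le\ |E(G)|\cdot n \ -\ 2\sum_{uv \in E(G)} \min\{d_G(u),d_G(v)\}.$$
For $G = K_{a,b}$ with $a \le b$, every edge $uv$ has $\min\{d_G(u),d_G(v)\}=a$, and the right-hand side equals $ab(b-a) = \alpha(1-\alpha)(1-2\alpha)n^3$ (where $\alpha = a/n$); this is maximized over $\alpha \in [0,1/2]$ at $\alpha_1 = \frac{1}{2}(1 - 1/\sqrt{3})$ with value $\frac{\sqrt{3}}{18}n^3$.

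The main obstacle is to show that the right-hand side above never exceeds $\frac{\sqrt{3}}{18}n^3$ for an arbitrary bipartite $G$ of order $n$. Since $\min\{x,y\}$ is not smooth or convex in a directly helpful way, standard Jensen-type arguments must be used with care. I plan to rewrite the quantity via the identity $2\sum_{uv \in E(G)}\min\{d_G(u),d_G(v)\} = \sum_{v \in V(G)} d_G(v)^2 - \mathrm{irr}(G)$, where $\mathrm{irr}(G) = \sum_{uv \in E(G)} |d_G(u) - d_G(v)|$ is the irregularity; the upper bound then reads $|E(G)|\,n - \sum_{v} d_G(v)^2 + \mathrm{irr}(G)$. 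Combining a lower bound on $\sum_v d_G(v)^2$ via Cauchy--Schwarz (on each side of the bipartition) with a bipartite-specific bound on $\mathrm{irr}(G)$ (exploiting $d_G(u) + d_G(v) \le n$ for every edge $uv$), and then carrying out a concrete optimization over the bipartition sizes $a \le b$ with $a+b = n$ and the edge count $m \le ab$, should yield the desired inequality, with equality asymptotically approached by $K_{\lfloor\alpha_1 n\rfloor,\lceil(1-\alpha_1)n\rceil}$.
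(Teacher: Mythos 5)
Your first step --- the per-edge bound $|n_G(u,v)-n_G(v,u)|\le n-2\min\{d_G(u),d_G(v)\}$ and the resulting estimate $Mo(G)\le mn-2\sum_{uv\in E(G)}\min\{d_G(u),d_G(v)\}$ --- is correct and is exactly inequality (1) of the paper's proof. The gap lies entirely in the second half. Rewriting $2\sum_{uv\in E(G)}\min\{d_G(u),d_G(v)\}=\sum_v d_G(v)^2-\mathrm{irr}(G)$ is a valid identity, but your plan then bounds the two resulting terms \emph{separately}: $\sum_v d_G(v)^2$ from below by Cauchy--Schwarz (in terms of the part sizes $a,b$ and $m$ only) and $\mathrm{irr}(G)$ from above. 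These two relaxations have conflicting extremal configurations --- Cauchy--Schwarz is lossy precisely when the degrees within a part are irregular, which is exactly when $\mathrm{irr}(G)$ is large --- and the combined bound provably overshoots $\tfrac{\sqrt3}{18}n^3$. Concretely, take $n=100$, $V_1$ of size $a=30$ with $20$ vertices joined to all of $V_2$ (of size $b=70$) and $10$ isolated vertices. Then $m=1400$, $mn=140000$, Cauchy--Schwarz gives only $\sum_v d_G(v)^2\ge m^2/a+m^2/b\approx 93333$ (the true value is $126000$), and the \emph{exact} irregularity is $1400\cdot 50=70000$; so your estimate evaluates to at least $140000-93333+70000\approx 116667=0.1167\,n^3>0.09622\,n^3$, and this cannot be repaired by a cleverer irregularity bound, since any valid upper bound in terms of $(a,b,m)$ must be at least this graph's actual irregularity. (The true value of $mn-2\sum\min$ here is $84000$; the Cauchy--Schwarz slack of about $32667$ is what kills you.) Note also that your proposed handle on $\mathrm{irr}(G)$, namely $d_G(u)+d_G(v)\le n$, only yields $|d_G(u)-d_G(v)|\le n-2\min\{d_G(u),d_G(v)\}$, i.e.\ $\mathrm{irr}(G)\le$ the very quantity you are trying to bound, which is circular.

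What is missing is a method that optimizes $\sum_{uv\in E(G)}\bigl(1-\tfrac{2}{n}\min\{d_G(u),d_G(v)\}\bigr)$ over the \emph{joint} distribution of degree pairs along edges, keeping the coupling between the degree sequence and which degrees are matched to which. The paper does this by introducing, for each pair $(i,j)$, a variable $m_{i,j}$ for the (normalized) number of edges joining a degree-$i$ vertex of $V_1$ to a degree-$j$ vertex of $V_2$, imposing the degree-consistency constraints as a linear program $(P)$, and bounding $\mathrm{OPT}(P)$ from above via weak duality by exhibiting an explicit feasible dual solution (constructed through an auxiliary two-variable problem $(D')$ and some calculus). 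As written, your ``concrete optimization over $a\le b$ and $m\le ab$'' cannot succeed; the argument needs to be replaced by something that retains this coupling.
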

By the above discussion,
Theorem \ref{theorem1} 
is best possible up to terms of order $O(n^2)$.
In particular, the factor $\frac{\sqrt{3}}{18}$ is best possible.

\begin{theorem}\label{theorem2}
If $G$ is a split graph that arises from a clique $C$ of order $\alpha n$
and an independent set $I$ of order $(1-\alpha)n$ for some $\alpha \in [0,1]$
by adding $m$ edges between vertices in $C$ and vertices in $I$, then 
\begin{eqnarray*}
Mo(G) & \leq & ((1+\alpha)n-1)m-\frac{2m^2}{(1-\alpha)n}\\
&\leq &
\begin{cases}
\alpha(1-\alpha)n^2\Big((1-\alpha)n-1\Big) & 
\mbox{, if $\alpha\leq \frac{1}{3}-\frac{1}{3n}$,}\\[3mm]
\frac{1}{8}(1-\alpha)n\Big((1+\alpha)n-1\Big)^2 &
\mbox{, if $\alpha>\frac{1}{3}-\frac{1}{3n}$}
\end{cases}\\
&\leq & \frac{4}{27}n^3.
\end{eqnarray*}
\end{theorem}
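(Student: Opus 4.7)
I partition $E(G)$ into the set $E_C$ of clique edges and the set $E_{CI}$ of $m$ cross edges, bounding the contribution of each type to $Mo(G)$ separately. For a cross edge $uv$ with $u\in C$ and $v\in I$, I first claim that $n_G(v,u)=1$: every $w\in C\setminus\{u\}$ has $d_G(w,u)=1\le d_G(w,v)$; every $w\in I\setminus\{v\}$ has $d_G(w,v)\ge 2$, whereas $d_G(w,u)\le 2$ whenever $w$ has any neighbor in $C$ (routed through the clique), and otherwise $d_G(w,u)=d_G(w,v)=\infty$. So only $v$ itself is strictly closer to $v$. The $d_C(v)-1$ neighbors of $v$ in $C\setminus\{u\}$ are moreover equidistant from $u$ and $v$, which gives $n_G(u,v)\le n-d_C(v)$ and hence
\[
|n_G(u,v)-n_G(v,u)|\le n-1-d_C(v).
\]

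For a clique edge $uv$, the vertices in $C\setminus\{u,v\}$ are equidistant from both endpoints, and an analogous case analysis shows that each $w\in I$ contributes $\mathbf{1}[w\sim u]-\mathbf{1}[w\sim v]$ to $n_G(u,v)-n_G(v,u)$ (vertices of $I$ isolated from $C$ contributing $0$), so $|n_G(u,v)-n_G(v,u)|=|d_I(u)-d_I(v)|$. The triangle inequality and a swap of summation then yield
\[
\sum_{uv\in E_C}|d_I(u)-d_I(v)|\le \sum_{w\in I} d_C(w)(\alpha n-d_C(w))=\alpha n\, m-\sum_{w\in I} d_C(w)^2,
\]
while summing the cross-edge bound over $E_{CI}$, and using the fact that each $v\in I$ appears in exactly $d_C(v)$ cross edges, gives $(n-1)m-\sum_{v\in I} d_C(v)^2$. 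Adding the two contributions and invoking Cauchy--Schwarz in the form $\sum_{v\in I} d_C(v)^2\ge m^2/((1-\alpha)n)$ produces the first inequality of the theorem.

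For the second inequality, I optimize the concave quadratic $f(m)=((1+\alpha)n-1)m-2m^2/((1-\alpha)n)$ over the feasible range $m\in[0,\alpha(1-\alpha)n^2]$. Its unconstrained maximizer $m^\star=(1-\alpha)n((1+\alpha)n-1)/4$ lies inside the feasible interval exactly when $\alpha>\tfrac{1}{3}-\tfrac{1}{3n}$; substituting $m^\star$ in that case and the upper boundary $m=\alpha(1-\alpha)n^2$ otherwise produces the two branches. The final bound $\tfrac{4}{27}n^3$ then follows by elementary calculus on each branch as a function of $\alpha$, both maxima occurring at $\alpha=\tfrac{1}{3}$. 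The main obstacle is finding a clique-edge bound that reshapes $|d_I(u)-d_I(v)|$ into the same quadratic $\sum_{v\in I} d_C(v)^2$ that already controls the cross edges; the XOR-sum swap above is the step that makes this work, so that a single application of Cauchy--Schwarz closes the entire estimate.
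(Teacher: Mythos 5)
Your proposal is correct, and while the overall architecture matches the paper's proof (the same pointwise bounds $n-1-d_C(v)$ for cross edges and $|d_I(u)-d_I(v)|$ for clique edges, followed by the same quadratic optimization in $m$ and then in $\alpha$), your treatment of the clique-edge sum is genuinely different from the paper's. The paper proves its Claim \ref{claim3}, namely $\sum_{uu'\in \binom{C}{2}}|d_G(u)-d_G(u')|\leq \alpha n m-\frac{m^2}{(1-\alpha)n}$, by sorting the degrees $d_1\geq\dots\geq d_{\alpha n}$, rewriting the sum as $\sum_i(\alpha n+1-2i)d_i$, and identifying the extremal degree sequence ($r$ universal vertices plus one vertex of degree $s$) by a rearrangement argument; it then applies Cauchy--Schwarz separately to the cross-edge term. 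You instead bound $|d_I(u)-d_I(v)|\leq\sum_{w\in I}|\mathbf{1}[w\sim u]-\mathbf{1}[w\sim v]|$ and swap the order of summation to get $\sum_{w\in I}d_C(w)(\alpha n-d_C(w))=\alpha n\,m-\sum_{w\in I}d_C(w)^2$, which makes the clique-edge contribution carry the \emph{same} quadratic $\sum_w d_C(w)^2$ as the cross-edge contribution, so a single application of Cauchy--Schwarz finishes the first inequality. Your route is shorter and avoids the integrality/extremal-sequence bookkeeping (the parameters $r$ and $s$) entirely; what the paper's rearrangement argument buys in exchange is an exact identification of the maximizing degree sequence, which the authors then reuse to describe the extremal constructions showing the bounds are tight up to lower-order terms. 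The remaining steps (the threshold $\alpha=\frac13-\frac1{3n}$ from comparing $m^*=\frac14(1-\alpha)n((1+\alpha)n-1)$ with $\alpha(1-\alpha)n^2$, and the final maximization of $\alpha(1-\alpha)^2$ and $\frac18(1-\alpha)(1+\alpha)^2$ at $\alpha=\frac13$) agree with the paper exactly.
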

Each bound stated in Theorem \ref{theorem2} 
is best possible up to terms of lower order.
More precisely, for each stated bound, say $b(n,\alpha,m)$,
there are graphs $G$ of arbitrarily large order $n$ 
with $Mo(G)\geq b(n,\alpha,m)-O(n^2)$.

All proofs are given in the next section.

\section{Proofs}

Our proofs rely on relaxations of the Mostar index,
linear programming, duality,
and arguments from optimization theory.

\begin{proof}[Proof of Theorem \ref{theorem1}]
Let $G$ be a bipartite graph of order $n$.

For an edge $uv$ of $G$ with $n_G(u,v)\geq n_G(v,u)$, 
we have 
$n_G(v,u)\geq d_G(v)$ 
and
$n_G(u,v)\leq n-n_G(v,u)\leq n-d_G(v)$,
which implies
$$
|n_G(u,v)-n_G(v,u)|
=n_G(u,v)-n_G(v,u)
\leq n-2d_G(v)
\leq n\left(1-\frac{2}{n}\min\big\{ d_G(u),d_G(v)\big\}\right).$$
For the Mostar index of $G$, this implies
\begin{eqnarray}\label{e1}
Mo(G) &\leq & \sum\limits_{uv\in E(G)}n\left(1-\frac{2}{n}\min\big\{ d_G(u),d_G(v)\big\}\right).
\end{eqnarray}
Let the two partite sets $V_1$ and $V_2$ of $G$ have orders 
$\alpha n$ and $(1-\alpha)n$ for some $0<\alpha\leq \frac{1}{2}$, respectively.
Let 
$I=\{ 0,1,\ldots,(1-\alpha)n\}$ and
$J=\{ 0,1,\ldots,\alpha n\}$.
Let $V_1$ contain exactly $x_i\alpha n$ vertices of degree $i$ for every $i\in I$,
let $V_2$ contain exactly $y_j(1-\alpha) n$ vertices of degree $j$ for every $j\in J$, and
let $G$ have exactly $m_{i,j}\alpha(1-\alpha)n^2$ edges 
between a vertex from $V_1$ of degree $i$ and a vertex from $V_2$ of degree $j$
for every $(i,j)\in I\times J$.
The number of edges in $G$ that are incident with a vertex from $V_1$ of degree $i\in I$ equals 
$\sum\limits_{j\in J}m_{i,j}\alpha(1-\alpha)n^2=i x_i\alpha n$,
which implies 
$$\mbox{$\sum\limits_{j\in J}m_{i,j}-\frac{ix_i}{(1-\alpha)n}=0$ for every $i\in I$.}$$
Symmetrically, we obtain
$$\mbox{$\sum\limits_{i\in I}m_{i,j}-\frac{jy_j}{\alpha n}=0$ for every $j\in J$.}$$
By (\ref{e1}), we obtain 
\begin{eqnarray}\label{e2}
Mo(G)&\leq &\alpha(1-\alpha){\rm OPT}(P)n^3,
\end{eqnarray}
where ${\rm OPT}(P)$ denotes the optimal value 
of the following linear programm $(P)$:
$$
\begin{array}{rrrcll}
& \max  & \sum\limits_{(i,j)\in I\times J}m_{i,j}\left(1-\frac{2}{n}\min\{ i,j\}\right) ,&&&\\
& s.th.  & \sum\limits_{i\in I}x_i&=&1,&\\
(P) \,\,\,\,\,\,\,\,\, &   & \sum\limits_{j\in J}y_j&=&1,&\\
&   & \sum\limits_{j\in J}m_{i,j}-\frac{ix_i}{(1-\alpha)n}&=&0&\mbox{ for every $i\in I$},\\
&   & \sum\limits_{i\in I}m_{i,j}-\frac{jy_j}{\alpha n}&=&0&\mbox{ for every $j\in J$},\\
&   & x_i,y_j,m_{i,j} &\geq &0& \mbox{ for every $(i,j)\in I\times J$}.
\end{array}
$$
The dual of $(P)$ is the following linear programm $(D)$:
$$
\begin{array}{rrrcll}
& \min & p+q, &  &  &\\
& s.th. & p_i+q_j & \geq & 1-\frac{2}{n}\min\{ i,j\} & \mbox{ for every $i\in I$ and every $j\in J$},\\
(D) \,\,\,\,\,\,\,\,\, & & p & \geq & \frac{ip_i}{(1-\alpha)n} & \mbox{ for every $i\in I$},\\
& & q & \geq & \frac{jq_j}{\alpha n} & \mbox{ for every $j\in J$},\\
& & p,q,p_j,q_j & \in & \mathbb{R} & \mbox{ for every $i\in I$ and every $j\in J$}.
\end{array}
$$
For our argument, we actually only need the 
{\it weak duality inequality chain} for $(P)$ and $(D)$:
\begin{eqnarray}
&& \sum\limits_{(i,j)\in I\times J}m_{i,j}\left(1-\frac{2}{n}\min\{ i,j\}\right)\nonumber\\
& \leq &
\sum\limits_{(i,j)\in I\times J}m_{i,j}(p_i+q_j)
+\sum\limits_{i\in I}x_i\left(p-\frac{ip_i}{(1-\alpha)n}\right)
+\sum\limits_{j\in J}y_j\left(q-\frac{jq_j}{\alpha n}\right)\label{e3}\\
& = &
\sum\limits_{i\in I}x_ip
+\sum\limits_{j\in J}y_jq
+\sum\limits_{i\in I}\left(\sum\limits_{j\in J}m_{i,j}-\frac{ix_i}{(1-\alpha)n}\right)p_i
+\sum\limits_{j\in J}\left(\sum\limits_{i\in I}m_{i,j}-\frac{jy_j}{\alpha n}\right)q_j\label{e4}\\
& = & p+q,\label{e5}
\end{eqnarray}
where 
(\ref{e3}) follows from the non-negativity of the primal variables $x_i$, $y_j$, and $m_{i,j}$ and the conditions within $(D)$,
(\ref{e4}) corresponds to reordering terms, and
(\ref{e5}) follows from the conditions within $(P)$.
In particular, we have ${\rm OPT}(P)\leq {\rm OPT}(D)$.

We consider the following non-linear optimization problem $(D')$:
$$
\begin{array}{rrrcll}
& \min & p+q ,&  &  &\\
& s.th. & p+q & \geq & 1-2\alpha, &\\
(D') \,\,\,\,\,\,\,\,\, & & p+2\sqrt{2q\alpha} & \geq & 1 & \mbox{ if $q<2\alpha$},\\
& & 2\sqrt{2p(1-\alpha)}+q & \geq & 1 & \mbox{ if $p<\frac{2\alpha^2}{1-\alpha}$},\\
& & p,q & \geq & 0. &
\end{array}
$$
Claim \ref{claim1} below links $(D)$ and $(D')$.
Its proof uses the following elementary observation:
The function 
$$f:(0,\infty)\to \mathbb{R}:x\mapsto \frac{\beta}{x}+\gamma x$$ 
for $\beta\geq 0$ and $\gamma>0$ is 
non-increasing for $x\leq \sqrt{\frac{\beta}{\gamma}}$ and 
non-decreasing for $x\geq \sqrt{\frac{\beta}{\gamma}}$.
For some $\delta > 0$, this implies
\begin{eqnarray}\label{e6}
\min\Big\{ f(x):x\in (0,\delta]\Big\} & = & 
\begin{cases}
f\left(\sqrt{\frac{\beta}{\gamma}}\right)=2\sqrt{\beta\gamma} & \mbox{, if $\delta\geq \sqrt{\frac{\beta}{\gamma}}$},\\
f\left(\delta\right) & \mbox{, if $\delta\leq \sqrt{\frac{\beta}{\gamma}}$}.
\end{cases}
\end{eqnarray}
\begin{claim}\label{claim1}
${\rm OPT}(D)\leq {\rm OPT}(D')$.
\end{claim}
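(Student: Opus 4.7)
The claim asserts that the optimum of the linear-style program $(D)$ is bounded above by that of the non-linear program $(D')$. My plan is to prove this by exhibiting, for each feasible $(p,q)$ of $(D')$, explicit values of the auxiliary variables $(p_i)_{i\in I}$, $(q_j)_{j\in J}$ that make $(p,q,p_i,q_j)$ feasible in $(D)$. Since these auxiliary variables do not appear in the objective $p+q$, any such extension gives ${\rm OPT}(D)\leq p+q$, and hence ${\rm OPT}(D)\leq{\rm OPT}(D')$. The natural construction is to push each auxiliary variable to the upper bound forced by the last two groups of constraints of $(D)$, setting
$$p_0=q_0=1,\qquad p_i=\frac{(1-\alpha)np}{i}\ \ (i\ge 1),\qquad q_j=\frac{\alpha nq}{j}\ \ (j\ge 1).$$
With this choice the second and third groups of constraints of $(D)$ hold automatically (with equality for $i,j\ge 1$ and trivially for $i=0$ or $j=0$), and the entire task reduces to verifying the first group $p_i+q_j\geq 1-\tfrac{2}{n}\min\{i,j\}$ for all $(i,j)\in I\times J$.

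The four boundary cases $i=0$ or $j=0$ follow instantly from $p_0=q_0=1$ together with $p_i,q_j\geq 0$. For $i,j\geq 1$ the inequality becomes
$$\frac{(1-\alpha)np}{i}+\frac{\alpha nq}{j}+\frac{2\min\{i,j\}}{n}\ \geq\ 1,$$
which I would handle by splitting on the sign of $i-j$. In the case $i\leq j$, only the summand $\frac{\alpha nq}{j}$ depends on $j$ and is decreasing in $j$, so the worst case is $j=\alpha n$; the remaining one-variable expression $\frac{(1-\alpha)np}{i}+\frac{2i}{n}+q$ is then handled by observation (\ref{e6}) with $\beta=(1-\alpha)np$, $\gamma=2/n$, $\delta=\alpha n$, noting that the minimum over integer $i\geq 1$ is bounded below by the minimum over the real interval $(0,\alpha n]$. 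This yields the lower bound $2\sqrt{2(1-\alpha)p}+q$ when $p\leq\frac{2\alpha^2}{1-\alpha}$ and $\frac{(1-\alpha)p}{\alpha}+q+2\alpha$ otherwise. The case $i>j$ is symmetric: the worst case is $i=(1-\alpha)n$, and (\ref{e6}) yields $p+2\sqrt{2\alpha q}$ when $q\leq 2\alpha$ and $p+q+2\alpha$ otherwise.

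The main obstacle is matching these four lower bounds against the three constraints of $(D')$. The two interior values are exactly the square-root constraints of $(D')$, and by (\ref{e6}) they are the relevant ones precisely in the regimes $p<\frac{2\alpha^2}{1-\alpha}$ and $q<2\alpha$ where those constraints are imposed. Of the two boundary values, $p+q+2\alpha\geq 1$ is literally the linear constraint of $(D')$, while $\frac{(1-\alpha)p}{\alpha}+q+2\alpha\geq 1$ follows from it after observing that $\alpha\leq\tfrac12$ forces $\frac{(1-\alpha)p}{\alpha}\geq p$. Finally, the transitions at $p=\frac{2\alpha^2}{1-\alpha}$ and $q=2\alpha$ are seamless because the two formulas from (\ref{e6}) agree at the boundary, so the pairing of sub-regimes with constraints of $(D')$ is clean and exhaustive.
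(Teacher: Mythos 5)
Your proposal is correct and follows essentially the same route as the paper: the same assignment $p_0=q_0=1$, $p_i=\frac{(1-\alpha)np}{i}$, $q_j=\frac{\alpha nq}{j}$, the same reduction of feasibility to the constraints $p_i+q_j\geq 1-\frac{2}{n}\min\{i,j\}$, and the same use of observation (\ref{e6}) with the case split on $q\lessgtr 2\alpha$ and $p\lessgtr\frac{2\alpha^2}{1-\alpha}$ matched against the constraints of $(D')$. The only cosmetic difference is that you first push $i$ (resp.\ $j$) to its extreme value before minimizing over the other index, whereas the paper bounds $p_i\geq p$ (resp.\ $q_j\geq q$) directly; these are the same step.
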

\begin{proof}[Proof of Claim \ref{claim1}]
Let $(p,q)$ be a feasible solution of $(D')$.
Let 
$p_0=q_0=1$,
$p_i=\frac{(1-\alpha)np}{i}$ for $i\in I\setminus \{ 0\}$, and
$q_j=\frac{\alpha nq}{j}$ for $j\in J\setminus \{ 0\}$.
In order to complete the proof, it suffices to show that
$p$, $q$, the $p_i$s, and the $q_j$s form a feasible solution of $(D)$.
By definition, we have
$p \geq \frac{ip_i}{(1-\alpha)n}$ for every $i\in I$
and
$q \geq \frac{jq_j}{\alpha n}$ for every $j\in J$.

Now, let $(i,j)\in I\times J$.

First, suppose that $\min\{ i,j\}=0$.
In this case, we have $p_i+q_j\geq 1\geq 1-\frac{2}{n}\min\{i,j\}$.

Next, suppose that $1\leq j\leq i$.
The inequality
$p+\frac{\alpha nq}{j}\geq 1-\frac{2j}{n}$
is equivalent to
$p+\left(\frac{q}{x}+2\alpha x\right)\geq 1$ for $x=\frac{j}{\alpha n}\in (0,1]$.
If $q\geq 2\alpha$, then $1\leq \sqrt{\frac{q}{2\alpha}}$, and
$p+\left(\frac{q}{x}+2\alpha x\right)
\stackrel{(\ref{e6})}{\geq} p+\left(q+2\alpha\right)
\stackrel{(D')}{\geq} 1$.
If $q<2\alpha$, then $1>\sqrt{\frac{q}{2\alpha}}$, and 
$p+\left(\frac{q}{x}+2\alpha x\right)
\stackrel{(\ref{e6})}{\geq} p+2\sqrt{2q\alpha}
\stackrel{(D')}{\geq} 1$.
Altogether, it follows that $p+\frac{\alpha nq}{j}\geq 1-\frac{2j}{n}$, and, hence,
$$p_i+q_j=\frac{(1-\alpha)np}{i}+\frac{\alpha nq}{j}
\stackrel{i\leq (1-\alpha) n}{\geq} p+\frac{\alpha nq}{j}\geq 1-\frac{2j}{n}=1-\frac{2}{n}\min\{ i,j\}.$$
Finally, suppose that $1\leq i\leq j$.
Since $j\leq \alpha n$, we have $i\leq \alpha n$ in this case.
The inequality
$\frac{(1-\alpha)np}{i}+q\geq 1-\frac{2i}{n}$
is equivalent to
$\left(\frac{p}{x}+2(1-\alpha) x\right)+q\geq 1$ for $x=\frac{i}{(1-\alpha) n}\in \left(0,\frac{\alpha}{1-\alpha}\right]$.
If $p\geq \frac{2\alpha^2}{1-\alpha}$, 
then $\frac{\alpha}{1-\alpha}\leq \sqrt{\frac{p}{2(1-\alpha)}}$, and
$\left(\frac{p}{x}+2(1-\alpha) x\right)+q
\stackrel{(\ref{e6})}{\geq}
\left(\frac{1-\alpha}{\alpha}p+2\alpha\right)+q
\stackrel{\alpha\leq 1/2}{\geq}
p+2\alpha+q
\stackrel{(D')}{\geq} 1$.
If $p<\frac{2\alpha^2}{1-\alpha}$, 
then $\frac{\alpha}{1-\alpha}>\sqrt{\frac{p}{2(1-\alpha)}}$, and
$\left(\frac{p}{x}+2(1-\alpha) x\right)+q
\stackrel{(\ref{e6})}{\geq}
2\sqrt{2p(1-\alpha)}+q 
\stackrel{(D')}{\geq} 1$.
Altogether, it follows that $\frac{(1-\alpha)np}{i}+q\geq 1-\frac{2i}{n}$, and, hence,
$$p_i+q_j=\frac{(1-\alpha)np}{i}+\frac{\alpha nq}{j}
\stackrel{j\leq \alpha n}{\geq} \frac{(1-\alpha)np}{i}+q\geq 1-\frac{2i}{n}=1-\frac{2}{n}\min\{ i,j\}.$$
Since all conditions within $(D)$ are satisfied, it follows that
$p$, $q$, the $p_i$s, and the $q_j$s form a feasible solution of $(D)$
as desired, which completes the proof of the claim.
\end{proof}
Combining (\ref{e2}), the weak duality inequality ${\rm OPT}(P)\leq {\rm OPT}(D)$, and Claim \ref{claim1}
implies
$$Mo(G)\leq \alpha(1-\alpha){\rm OPT}(D')n^3,$$
and the following claim completes the proof.

\begin{claim}\label{claim2}
${\rm OPT}(D')\leq \frac{\alpha_1(1-\alpha_1)(1-2\alpha_1)}{\alpha(1-\alpha)}$.
\end{claim}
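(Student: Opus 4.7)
My plan is to exhibit, for each $\alpha \in (0, \frac{1}{2}]$, an explicit feasible pair $(p, q)$ of $(D')$ whose value $p+q$ is at most $\frac{M}{\alpha(1-\alpha)}$, where $M := \alpha_1(1-\alpha_1)(1-2\alpha_1) = \frac{\sqrt{3}}{18}$. The analysis splits at the threshold $\alpha^\dagger := \frac{9-4\sqrt{2}}{14}$, the smaller root in $(0, \frac{1}{2}]$ of $28\alpha^2 - 36\alpha + 7 = 0$; note that $\alpha_1 < \alpha^\dagger < \frac{1}{2}$.

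For $\alpha \leq \alpha^\dagger$ I would take $(p,q) := (1-4\alpha,\,2\alpha)$. Then $p + q = 1 - 2\alpha$, so the first constraint is tight; $q = 2\alpha$ makes the second constraint vacuous; for the third there are two subcases: either $p \geq 2\alpha^2/(1-\alpha)$, so it is vacuous, or $p < 2\alpha^2/(1-\alpha)$, in which case the constraint reads $2\sqrt{2(1-4\alpha)(1-\alpha)} \geq 1-2\alpha$, which upon squaring reduces to $28\alpha^2 - 36\alpha + 7 \geq 0$, holding precisely for $\alpha \leq \alpha^\dagger$. Feasibility established, the required bound follows from $\alpha(1-\alpha)(1-2\alpha) \leq M$, which is the defining extremal property of $\alpha_1$.

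For $\alpha^\dagger < \alpha \leq \frac{1}{2}$ I would take $(p,q)$ to be the unique positive solution of the ``both tight'' system $p + 2\sqrt{2\alpha q} = 1$ and $q + 2\sqrt{2(1-\alpha)p} = 1$, whose existence follows by substituting $q = (1-p)^2/(8\alpha)$ from the first equation into the second and applying the intermediate value theorem. Constraint (i) then holds via the identity $p + q - (1-2\alpha) = (\sqrt{q} - \sqrt{2\alpha})^2 \geq 0$, which one obtains directly from $p = 1 - 2\sqrt{2\alpha q}$. The applicability conditions $q < 2\alpha$ and $p < 2\alpha^2/(1-\alpha)$ for (ii) and (iii) hold with equality at $\alpha = \alpha^\dagger$ and with strict inequality for $\alpha > \alpha^\dagger$ by continuity of the solution curve.

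The main obstacle is proving $\alpha(1-\alpha)(p+q) \leq M$ for this implicitly defined both-tight solution. My plan is to show that $h(\alpha) := \alpha(1-\alpha)(p(\alpha)+q(\alpha))$ is non-increasing on $(\alpha^\dagger, \frac{1}{2}]$; since $h$ extends continuously to the left endpoint with $h(\alpha^\dagger) = \alpha^\dagger(1-\alpha^\dagger)(1-2\alpha^\dagger) \leq M$ (again by the extremal property of $\alpha_1$), the bound would follow. The monotonicity should come from implicit differentiation of the tight system, which yields a $2\times 2$ linear system for $(p'(\alpha), q'(\alpha))$; substituting into $h'(\alpha) = (1-2\alpha)(p+q) + \alpha(1-\alpha)(p'+q')$ and using the tight equations to eliminate square roots reduces the non-positivity of $h'$ to a polynomial inequality in $\alpha$, $p$, $q$ that can be verified directly.
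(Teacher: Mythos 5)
Your first case is complete and in fact slightly sharper than the paper's: the paper also neutralizes the conditional constraints of $(D')$ and sets $p+q=1-2\alpha$, but it insists on $q\geq 2\alpha$ and $p\geq \frac{2\alpha^2}{1-\alpha}$ simultaneously, which only works up to $\alpha_2=\frac{5-\sqrt{17}}{4}\approx 0.219$, whereas your choice $(p,q)=(1-4\alpha,2\alpha)$ keeps only the second of these and handles the third constraint directly, pushing the threshold to $\alpha^\dagger\approx 0.239$. (Do record that $p=1-4\alpha\geq 0$ there, which holds since $\alpha^\dagger<\frac{1}{4}$.) The identity $p+q-(1-2\alpha)=(\sqrt{q}-\sqrt{2\alpha})^2$ for the both-tight solution is correct and makes the first constraint automatic in the second case; and since your $(p,q)$ satisfies the two conditional inequalities with equality, feasibility there does not even depend on the ``applicability conditions'' you discuss.

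The gap is the value bound in the second case. The paper avoids your difficulty by decreeing $p+q=\frac{0.09622}{\alpha(1-\alpha)}$ (with $p=0.42\alpha$), so the bound $p+q\leq \frac{\sqrt{3}/18}{\alpha(1-\alpha)}$ is immediate and all the work goes into feasibility, which reduces to three single-variable calculus estimates for explicit elementary functions of $\alpha$. You invert this trade-off: feasibility is trivial, but the entire difficulty is concentrated in showing $h(\alpha)=\alpha(1-\alpha)\big(p(\alpha)+q(\alpha)\big)\leq \frac{\sqrt{3}}{18}$ for an implicitly defined $(p(\alpha),q(\alpha))$, and your argument for that stops at ``reduces to a polynomial inequality that can be verified directly.'' The claim itself appears true --- numerically $h$ decreases from $h(\alpha^\dagger)\approx 0.0950$ to $h(1/2)=\frac{3}{2}-\sqrt{2}\approx 0.0858$, comfortably below $\frac{\sqrt{3}}{18}\approx 0.0962$ --- but the proposed verification is not routine: the solution curve is defined by a quartic (eliminate $q=\frac{(1-p)^2}{8\alpha}$ and square the second tight equation), the implicit derivatives carry the denominator $1-AB$ with $A=\frac{4\alpha}{1-p}$ and $B=\frac{4(1-\alpha)}{1-q}$ whose sign must also be established, and the resulting condition is a polynomial inequality in $\alpha,p,q$ constrained to that curve. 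You also assert uniqueness of the positive both-tight solution without checking that the substituted function of $p$ is monotone. Until the monotonicity of $h$ (or a direct bound on $p+q$ along the curve) is actually carried out, the proof is incomplete at precisely the step that carries all the weight.
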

\begin{proof}[Proof of Claim \ref{claim2}]
Since $(D')$ is a minimization problem, 
it suffices to provide a feasible solution $(p,q)$ for $(D')$
such that $p+q$ is at most the desired upper bound on ${\rm OPT}(D')$.

First, suppose that $\alpha\leq \alpha_2:=\frac{5-\sqrt{17}}{4}\approx 0.21922$,
which implies $2\alpha+\frac{2\alpha^2}{1-\alpha}\leq 1-2\alpha$.
In this case, we choose $q\geq 2\alpha$ and $p\geq \frac{2\alpha^2}{1-\alpha}$
in such a way that $p+q=1-2\alpha$,
which yields a feasible solution for $(D')$.
As observed in the introduction,
the function $x\mapsto x(1-x)(1-2x)$ with $x\in \left[0,\frac{1}{2}\right]$
is maximized for $x=\alpha_1$,
which implies 
$\alpha(1-\alpha)(p+q)=\alpha(1-\alpha)(1-2\alpha)\leq \alpha_1(1-\alpha_1)(1-2\alpha_1)$, and, hence,
$p+q\leq \frac{\alpha_1(1-\alpha_1)(1-2\alpha_1)}{\alpha(1-\alpha)}$.

Next, suppose that $\alpha\geq \alpha_2$.
In this case, we choose $p=0.42\alpha$ and $q=\frac{0.09622}{\alpha(1-\alpha)}-p$.

Since $p+q=\frac{0.09622}{\alpha(1-\alpha)}<
\frac{\alpha_1(1-\alpha_1)(1-2\alpha_1)}{\alpha(1-\alpha)}$,
it remains to show that $(p,q)$ is a feasible solution for $(D')$.

For $q=q(\alpha)=\frac{0.09622}{\alpha(1-\alpha)}-0.42\alpha$,
we obtain 
$$q'(\alpha)=-\frac{0.09622}{\alpha^2(1-\alpha)}+\frac{0.09622}{\alpha(1-\alpha)^2}-0.42,$$ 
which is negative for $\alpha\in [\alpha_2,0.5]$.
This implies that $q(\alpha)\geq q(0.5)=0.17488>0$ for $\alpha\in [\alpha_2,0.5]$.

For $f_1(\alpha)=p+q-(1-2\alpha)=\frac{0.09622}{\alpha(1-\alpha)}-1+2\alpha$,
we obtain 
$$f_1''(\alpha)=\frac{2\cdot 0.09622}{\alpha^3(1-\alpha)}-\frac{2\cdot 0.09622}{\alpha^2(1-\alpha)^2}+\frac{2\cdot 0.09622}{\alpha(1-\alpha)^3},$$ 
which is positive for $\alpha\in [\alpha_2,0.5]$.
This implies that 
$f_1'(\alpha)\geq f_1'(\alpha_2)>0.15571>0$ for $\alpha\in [\alpha_2,0.5]$,
and, hence,
$f_1(\alpha)\geq f_1(\alpha_2)>0.00059>0$ for $\alpha\in [\alpha_2,0.5]$.
It follows that $p+q\geq 1-2\alpha$.

For $f_2(\alpha)=p+2\sqrt{2q\alpha}-1$, we obtain
$$f_2'(\alpha)=0.42-\sqrt{2}\frac{2\cdot 0.42\alpha-\frac{0.09622}{(1-\alpha)^2}}{\sqrt{\left(\frac{0.09622}{\alpha(1-\alpha)}-0.42\alpha\right)\alpha}}.$$
On the interval $[\alpha_2,0.5]$,
the term $2\cdot 0.42\alpha-\frac{0.09622}{(1-\alpha)^2}$ is maximized 
for $\alpha=1-\left(\frac{0.09622}{0.42}\right)^{\frac{1}{3}}$,
in which case it is strictly smaller than $0.06903$.
Similarly,
on the interval $[\alpha_2,0.5]$,
the term $\sqrt{\left(\frac{0.09622}{\alpha(1-\alpha)}-0.42\alpha\right)\alpha}$ is minimized 
for $\alpha=0.5$,
in which case it is strictly larger than $0.29570$.
This implies that 
$f_2'(\alpha)>0.42-\sqrt{2}\frac{0.06903}{0.29570}>0.08985>0$ 
for $\alpha\in [\alpha_2,0.5]$,
and, hence,
$f_2(\alpha)\geq f_2(\alpha_2)>0.00004>0$ for $\alpha\in [\alpha_2,0.5]$.
It follows that $p+2\sqrt{2q\alpha}\geq 1$.

For $f_3(\alpha)=2\sqrt{2p(1-\alpha)}+q-1
=2\sqrt{2\cdot 0.42}\sqrt{\alpha(1-\alpha)}+\frac{0.09622}{\alpha(1-\alpha)}-0.42\alpha-1$, we obtain
$$f_3'(\alpha)=
\frac{\sqrt{2\cdot 0.42}(1-2\alpha)}{\sqrt{\alpha(1-\alpha)}}
-\frac{0.09622}{\alpha^2(1-\alpha)}
+\frac{0.09622}{\alpha(1-\alpha)^2}
-0.42.$$
Similarly as above, it is a routine task to verify that 
$f_3'(\alpha)$ is negative on $[\alpha_2,0.5]$,
which implies
$f_3(\alpha)\geq f_3(0.5)>0.09139>0$
for $\alpha\in [\alpha_2,0.5]$.
It follows that $2\sqrt{2p(1-\alpha)+q}\geq 1$,
which completes the proof of the claim.
\end{proof}
As observed above, this complete the proof.
\end{proof}

\begin{proof}[Proof of Theorem \ref{theorem2}]
Let $G$ be a split graph that arises from a clique $C$ of order $\alpha n$
and an independent set $I$ of order $(1-\alpha)n$ for some $\alpha \in [0,1]$
by adding $m$ edges between vertices in $C$ and vertices in $I$.

For an edge $uv$ of $G$ with $u\in C$ and $v\in I$, we have $n_G(v,u)=1$ and
$n_G(u,v)\leq |I\setminus \{ v\}|+|\{ u\}|+|C\setminus N_G(v)|=n-d_G(v)$,
which implies
$$|n_G(u,v)-n_G(v,u)|\leq n-d_G(v)-1.$$
Let $E$ be the set of the $m$ edges of $G$ between $C$ and $I$.

For an edge $uu'$ of $G$ with $u,u'\in C$, we have 
$n_G(u,u')=|(N_G(u)\cap I)\setminus (N_G(u')\cap I)|$
and
$n_G(u',u)=|(N_G(u')\cap I)\setminus (N_G(u)\cap I)|$,
which implies
$$|n_G(u,u')-n_G(u',u)|=|d_G(u)-d_G(u')|.$$
For the Mostar index of $G$, this implies
\begin{eqnarray}
Mo(G) 
&\leq & 
\sum\limits_{uv\in E}(n-d_G(v)-1)
+\sum\limits_{uu'\in {C\choose 2}}|d_G(u)-d_G(u')|\nonumber \\
&=& 
m(n-1)-\sum\limits_{v\in I}d_G(v)^2
+\sum\limits_{uu'\in {C\choose 2}}|d_G(u)-d_G(u')|\nonumber \\
& \leq & 
m(n-1)-\frac{m^2}{(1-\alpha)n}
+\sum\limits_{uu'\in {C\choose 2}}|d_G(u)-d_G(u')|,\label{e7}
\end{eqnarray}
where the final inequality uses the Cauchy-Schwarz inequality
$$\sum\limits_{v\in I}d_G(v)^2
\geq \frac{1}{|I|}\left(\sum\limits_{v\in I}d_G(v)\right)^2
\geq \frac{m^2}{(1-\alpha)n}.$$
\begin{claim}\label{claim3}
$\sum\limits_{uu'\in {C\choose 2}}|d_G(u)-d_G(u')|\leq \alpha n m-\frac{m^2}{(1-\alpha)n}$.
\end{claim}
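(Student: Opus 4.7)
My plan is to exploit the fact that $C$ is a clique. For $u\in C$, write $e(u):=|N_G(u)\cap I|$; then $d_G(u)=(\alpha n-1)+e(u)$, so $|d_G(u)-d_G(u')|=|e(u)-e(u')|$ for every pair $\{u,u'\}\in\binom{C}{2}$. The target sum therefore equals
$$S:=\sum_{\{u,u'\}\in \binom{C}{2}}|e(u)-e(u')|,$$
where $\sum_{u\in C}e(u)=m$ and $0\leq e(u)\leq (1-\alpha)n$ for every $u\in C$.

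The core step is a layer-cake decomposition. For each integer $t\in\{0,1,\ldots,(1-\alpha)n-1\}$ define $g(t):=|\{u\in C:e(u)>t\}|$. Since for non-negative integers $a,b$ the absolute difference $|a-b|$ equals the number of integers $t$ for which exactly one of $a,b$ exceeds $t$, summing over pairs and swapping the order of summation gives
$$S=\sum_{t=0}^{(1-\alpha)n-1}g(t)\bigl(\alpha n-g(t)\bigr),$$
because for each fixed $t$ the number of unordered pairs in $\binom{C}{2}$ straddling the threshold $t$ is exactly $g(t)(\alpha n-g(t))$. A parallel calculation yields $\sum_{t=0}^{(1-\alpha)n-1}g(t)=\sum_{u\in C}e(u)=m$.

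The final ingredient is Cauchy--Schwarz applied to the $(1-\alpha)n$ summands $g(0),\ldots,g((1-\alpha)n-1)$, which gives
$$\sum_{t=0}^{(1-\alpha)n-1}g(t)^2\ \geq\ \frac{1}{(1-\alpha)n}\left(\sum_{t=0}^{(1-\alpha)n-1}g(t)\right)^{\!2}=\frac{m^2}{(1-\alpha)n}.$$
Substituting this into the identity $S=\alpha n\sum_t g(t)-\sum_t g(t)^2=\alpha nm-\sum_t g(t)^2$ yields precisely $S\leq \alpha nm-\frac{m^2}{(1-\alpha)n}$, as required. I do not foresee any real obstacle: once the reduction $|d_G(u)-d_G(u')|=|e(u)-e(u')|$ is observed, the remaining argument is a short and self-contained combination of layer-cake summation with Cauchy--Schwarz, and the extremal configuration (a threshold assignment of the $e(u)$ values) saturates the bound, confirming there is no slack to worry about.
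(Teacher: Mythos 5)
Your proof is correct, but it takes a genuinely different route from the paper's. The paper sorts the $I$-degrees $d_1\geq\cdots\geq d_{\alpha n}$ of the clique vertices, rewrites the sum as the linear functional $\sum_{i}(\alpha n+1-2i)d_i$, observes that this is maximized by the threshold configuration $d_1=\cdots=d_r=(1-\alpha)n$, $d_{r+1}=s$ (where $m=(1-\alpha)nr+s$), and then verifies by direct computation that the resulting value falls short of $\alpha nm-\frac{m^2}{(1-\alpha)n}$ by exactly $\frac{s((1-\alpha)n-s)}{(1-\alpha)n}\geq 0$. You instead use a layer-cake decomposition: writing $|e(u)-e(u')|$ as a count over thresholds $t$ gives the exact identity $S=\sum_t g(t)(\alpha n-g(t))=\alpha nm-\sum_t g(t)^2$, after which a single application of Cauchy--Schwarz over the $(1-\alpha)n$ thresholds finishes the job. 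Your route avoids both the identification of the extremal configuration and the final algebraic comparison, and it makes transparent where the slack lives (in the variance of the $g(t)$); the paper's route has the advantage of exhibiting the near-extremal graphs explicitly, which the authors reuse afterwards to argue that the bounds in Theorem \ref{theorem2} are asymptotically tight. One small caveat: your closing remark that the threshold assignment ``saturates the bound'' is slightly overstated --- equality in your Cauchy--Schwarz step requires all $g(t)$ equal, which for the threshold configuration happens only when $s=0$, i.e., when $(1-\alpha)n$ divides $m$; otherwise there is a residual gap of $\frac{s((1-\alpha)n-s)}{(1-\alpha)n}$, exactly as the paper computes. This does not affect the validity of the inequality you were asked to prove.
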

\begin{proof}[Proof of Claim \ref{claim3}]
Let $m=(1-\alpha)nr+s$ for non-negative integers $r$ and $s$ with $s<(1-\alpha)n$.

Let the $\alpha n$ vertices in $C$ have 
$d_1\geq d_2\geq \ldots \geq d_{\alpha n}$ 
neighbors in $I$, respectively.

Note that 
\begin{eqnarray*}
\sum\limits_{uu'\in {C\choose 2}}|d_G(u)-d_G(u')|
&=& \sum\limits_{1\leq i<j\leq \alpha n}(d_i-d_j)\\
&=& \Big(\alpha n+1-2\cdot 1\Big)d_1
+\Big(\alpha n+1-2\cdot 2\Big)d_2
+\cdots+
\Big(\alpha n+1-2\cdot \alpha n\Big)d_{\alpha n}.
\end{eqnarray*}
Since each $d_i$ is at most $(1-\alpha)n$ and their sum is $m$,
this term is maximized for $d_1=\ldots=d_r=(1-\alpha)n$ and $d_{r+1}=s$, 
and we obtain
\begin{eqnarray*}
\sum\limits_{uu'\in {C\choose 2}}|d_G(u)-d_G(u')|
&\leq & \sum\limits_{i=1}^{r}\Big(\alpha n+1-2\cdot i\Big)(1-\alpha)n
+\Big(\alpha n+1-2\cdot (r+1)\Big)s\\
&=& (\alpha n-r)r(1-\alpha)n+(\alpha n-1-2r)s\\
&\leq &\alpha n m-\frac{m^2}{(1-\alpha)n},
\end{eqnarray*}
where the final inequality holds, since, using $m=(1-\alpha)nr+s$,
we obtain
$$\left(\alpha n m-\frac{m^2}{(1-\alpha)n}\right)-
\Big((\alpha n-r)r(1-\alpha)n+(\alpha n-1-2r)s\Big)
=\frac{s((1-\alpha)n-s)}{(1-\alpha)n}\geq 0.$$
\end{proof}
By (\ref{e7}) and Claim \ref{claim3}, we obtain
$$
Mo(G) \leq 
m(n-1)-\frac{m^2}{(1-\alpha)n}
+\alpha n m-\frac{m^2}{(1-\alpha)n}
=((1+\alpha)n-1)m-\frac{2m^2}{(1-\alpha)n}:=g(n,\alpha,m).$$
For fixed $n$ and $\alpha$, 
the expression $g(n,\alpha,m)$ is a quadratic function of $m$,
which is non-decreasing for $m\leq m^*:=\frac{1}{4}(1-\alpha)n\Big((1+\alpha)n-1\Big)$
and non-increasing for $m\geq m^*$.

If $\alpha\leq\frac{1}{3}-\frac{1}{3n}$, 
then $m\leq |C|\cdot |I|=\alpha(1-\alpha)n^2\leq m^*$, and 
$$Mo(G) \leq g(n,\alpha,m)\leq g\left(n,\alpha,\alpha(1-\alpha)n^2\right)
=\alpha(1-\alpha)n^2\Big((1-\alpha)n-1\Big).$$
If $\alpha>\frac{1}{3}-\frac{1}{3n}$, 
then 
$$Mo(G) \leq g(n,\alpha,m)\leq g\left(n,\alpha,m^*\right)
=\frac{1}{8}(1-\alpha)n\Big((1+\alpha)n-1\Big)^2.$$
Since 
$\max\left\{\alpha(1-\alpha)^2,\frac{1}{8}(1-\alpha)(1+\alpha)^2\right\}\leq \frac{4}{27}$
for every $\alpha\in [0,1]$,
the proof is complete.
\end{proof}
In order to construct the graphs mentioned after Theorem \ref{theorem2},
showing that the stated bounds are best possible up to terms of lower order,
one can follow the above proof:
The $m$ edges between $C$ and $I$ should be arranged in such a way that 
the degrees of the vertices in $I$ are as regular as possible 
(reducing the error in the application of the Cauchy-Schwarz inequality)
and $C$ contains as many universal vertices as possible
(reducing the error in the inequality from Claim \ref{claim3}). 
This actually corresponds exactly to the situation 
$d_1=\ldots=d_r=(1-\alpha)n$ and $d_{r+1}=s$
considered in the proof of Claim \ref{claim3}.


\begin{thebibliography}{}
\bibitem{aldo} 
A. Ali and T. Do\v{s}li\'{c}, Mostar index: results and perspectives, {\em Appl. Math. Comput.} {\bf 404} (2021) Paper No. 126245.

\bibitem{AXK}
Y. Alizadeh, K. Xu, and S. Klav\v{z}ar,  On the Mostar index of trees and product graphs, {\em Filomat} {\bf 35} (2021), no. 14, 4637--4643.

\bibitem{CLXZ}
H. Chen, H. Liu, Q. Xiao, and J. Zhang, Extremal phenylene chains with respect to the Mostar index, {\em Discrete Math. Algorithms Appl.} {\bf 13} (2021), no. 6, Paper No. 2150075.

\bibitem{DL1}
K. Deng and S. Li, Extremal catacondensed benzenoids with respect to the Mostar index, {\em J. Math. Chem.} {\bf 58} (2020), no. 7, 1437--1465.

\bibitem{DL2} 
K. Deng and S. Li, On the extremal values for the Mostar index of trees with given degree sequence, {\em Appl. Math. Comput.} {\bf 390} (2021), Paper No. 125598.

\bibitem{DL3} 
K. Deng and S. Li, On the extremal Mostar indices of trees with a given segment sequence, {\em Bull. Malays. Math. Sci. Soc.} {\bf 45} (2022), no. 2, 593--612.

\bibitem{domasktizu}
T. Do\v{s}li\'{c}, I. Martinjak, R. \v{S}krekovski, S. Tipuri\'{c} Spu\v{z}evi\'{c}, and I. Zubac, Mostar index, {\em  J. Math. Chem.} {\bf 56} (2018) 2995--3013.

\bibitem{OSS}
\"{O}. E\u{g}ecio\u{g}lu, E. Sayg\i, and Z. Sayg\i, The Mostar index of Fibonacci and Lucas cubes, {\em Bull. Malays. Math. Sci. Soc.} {\bf 44} (2021), no. 6, 3677--3687.

\bibitem{gets} 
J. Geneson and S.-F. Tsai, Peripherality in networks: theory and applications, arXiv:2110.04464v1.

\bibitem{GA}
N. Ghanbari and S. Alikhani, Mostar index and edge Mostar index of polymers, {\em Comput. Appl. Math.} {\bf 40} (2021), no. 8, Paper No. 260.

\bibitem{GR}
M. Ghorbani and S. Rahmani, The Mostar index of fullerenes in terms of automorphism group, {\em Facta Univ. Ser. Math. Inform.} {\bf 35} (2020), no. 1, 151--165.

\bibitem{HZ1}
F. Hayat and B. Zhou, On cacti with large Mostar index, {\em Filomat} {\bf 33} (2019), no. 15, 4865--4873

\bibitem{HZ2}
F. Hayat and B. Zhou, On Mostar index of trees with parameters, {\em Filomat} {\bf 33} (2019), no. 19, 6453--6458.

\bibitem{Mollard}
M. Mollard, A relation between Wiener index and Mostar index for daisy cubes, {\em Discrete Math. Lett.} {\bf 10} (2022), 81--84.

\bibitem{Tepeh}
A. Tepeh, Extremal bicyclic graphs with respect to Mostar index, {\em Appl. Math. Comput.} {\bf 355} (2019), 319--324.

\bibitem{XZTHD}
Q. Xiao, M. Zeng, Y. Tang, H Hua, and H. Deng, The hexagonal chains with the first three maximal Mostar indices, {\em Discrete Appl. Math.} {\bf 288} (2021), 180--191.
\end{thebibliography}
\end{document}